\newtheorem{theorem}{Theorem}
\newtheorem{lemma}{Lemma}
\newtheorem{corollary}{Corollary}
\begin{document}

\title{Parity Considerations for Mex-Related Partition Functions of Andrews and Newman}

\author{Robson da Silva and James A. Sellers}

\date{}
\maketitle

\begin{abstract}
In a recent paper, Andrews and Newman extended the mex-function to integer partitions and proved many partition identities connected with these functions. In this paper, we present parity considerations of one of the families of functions they studied, namely $p_{t,t}(n)$. Among our results, we provide complete parity characterizations of $p_{1,1}(n)$ and $p_{3,3}(n)$.
\end{abstract}

\noindent {\bf Keywords}: Partition, parity, mex-function

\noindent {\bf Mathematics Subject Classification 2010}: 11P83, 05A17

\section{Introduction}
\label{SectIntro}

In \cite{A-N}, Andrews and Newman generalized the minimal excludant function (mex-function) to apply to integer partitions. Given a partition $\lambda$ of $n$, they defined the mex-function $\mbox{mex}_{A, a}(\lambda)$ to be the smallest integer congruent to $a$ modulo $A$ that is not part of $\lambda$. Denoting the number of partitions $\lambda$ of $n$ satisfying  
$$\mbox{mex}_{A,a}(\lambda) \equiv a \pmod{2A}$$
by $p_{A, a}(n)$, they proved that the generating function for $p_{t,t}(n)$ (see \cite[Lemma 9]{A-N}) is given by
\begin{equation}
\displaystyle\sum_{n=0}^{\infty} p_{t,t}(n) q^n = \frac{1}{(q;q)_{\infty}} \sum_{n=0}^{\infty} (-1)^n q^{tn(n+1)/2},
\label{GFk} 
\end{equation}
where we use the following standard $q$-series notation:
$$\begin{array}{l} 
(a;q)_0 = 1, \\
(a;q)_n = (1-a)(1-aq) \cdots (1-aq^{n-1}),\  \forall n \geq 1, \\ 
\end{array}$$
and
$$\begin{array}{l} 
(a;q)_{\infty} = \lim_{n \to \infty} (a;q)_n,\  |q|<1.
\end{array}$$

The two functions $p_{1,1}(n)$ and $p_{3,3}(n)$ play an important role in \cite{A-N}. Indeed, Andrews and Newman proved the following connections between these two functions and the enumeration of partitions according to their rank and their crank.

\begin{theorem}[\cite{A-N}, Theorem 2] 
For all $n\geq 1,$	$p_{1,1}(n)$ equals the number of partitions of $n$ with non-negative crank.
\end{theorem}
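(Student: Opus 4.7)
The plan is to match generating functions. By (\ref{GFk}) specialized to $t = 1$, the theorem reduces to proving
$$\sum_{n \geq 0} N_{\geq 0}(n)\,q^n \;=\; \frac{1}{(q;q)_{\infty}}\sum_{n \geq 0}(-1)^n q^{n(n+1)/2},$$
where $N_{\geq 0}(n)$ denotes the number of partitions of $n$ with non-negative crank.

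I would first exploit the symmetry of the Andrews--Garvan crank generating function,
$$C(z,q) \;=\; \sum_{m,n} M(m,n)\,z^m q^n \;=\; \frac{(q;q)_{\infty}}{(zq;q)_{\infty}(z^{-1}q;q)_{\infty}},$$
which is manifestly invariant under $z \mapsto z^{-1}$. Hence $M(m,n) = M(-m,n)$ (with the standard convention $M(0,1) = -1$), so that
$$N_{\geq 0}(n) \;=\; \frac{p(n) + M(0,n)}{2}.$$
Thus it is enough to produce a closed form for $\sum_{n \geq 0} M(0,n)\,q^n$, i.e.\ for the coefficient of $z^0$ in $C(z,q)$.

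The technical heart of the argument is this constant-term extraction. I would apply the Jacobi triple product in the form
$$\sum_{n \in \mathbb{Z}}(-1)^n z^n q^{n(n+1)/2} \;=\; (1 - z^{-1})(q;q)_{\infty}(zq;q)_{\infty}(z^{-1}q;q)_{\infty},$$
and then, via a partial-fractions (or residue) calculation that splits the bilateral sum into its $n \geq 0$ and $n < 0$ halves, I expect to obtain
$$\sum_{n \geq 0} M(0,n)\,q^n \;=\; \frac{1}{(q;q)_{\infty}}\left(2\sum_{n \geq 0}(-1)^n q^{n(n+1)/2} \;-\; 1\right).$$
Averaging this with $\sum p(n)q^n = 1/(q;q)_{\infty}$ collapses to exactly the desired identity by a one-line simplification.

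The main obstacle will be the constant-term extraction for $M(0,n)$: the Laurent expansion of $C(z,q)$ needs careful handling of convergence, and the boundary cases $n = 0, 1$ must be verified directly to reconcile the Andrews--Garvan convention $M(0,1) = -1$ with the genuine partition count (where the unique partition of $1$ has crank $-1$, so $N_{\geq 0}(1) = 0 = p_{1,1}(1)$). A cleaner alternative would be to cite the generating function for $M(0,n)$ directly from Garvan's original work on the crank, reducing the proof to the symmetry step and a short algebraic simplification.
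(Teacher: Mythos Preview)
The paper does not prove this statement at all. It is quoted verbatim from Andrews and Newman \cite{A-N} (their Theorem~2) as background motivation for why $p_{1,1}(n)$ is worth studying; the present paper then moves on to its own results (Theorems~\ref{1characterization} and~\ref{3characterization} and their corollaries). So there is no ``paper's own proof'' to compare your proposal against.

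As for the proposal itself: your outline is a reasonable route to the result and is essentially the argument one finds in the Andrews--Newman paper and the surrounding literature. The symmetry $M(m,n)=M(-m,n)$ together with the constant-term formula
\[
\sum_{n\ge 0} M(0,n)\,q^n \;=\; \frac{1}{(q;q)_\infty}\Bigl(-1+2\sum_{n\ge 0}(-1)^n q^{n(n+1)/2}\Bigr)
\]
does indeed average with $1/(q;q)_\infty$ to give the generating function \eqref{GFk} at $t=1$. You are right that the delicate point is the $n=1$ anomaly in the crank generating function, and also right that the cleanest fix is simply to cite the known formula for $\sum_n M(0,n)q^n$ (e.g.\ from Garvan or Andrews--Garvan) rather than redo the Laurent/partial-fraction extraction. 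But again, none of this appears in the paper under review; the authors take the theorem as an imported fact.
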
 
\begin{theorem}[\cite{A-N}, Theorem 3] 
For all $n\geq 1,$	$p_{3,3}(n)$ equals the number of partitions of $n$ with rank $\geq -1$.
\end{theorem}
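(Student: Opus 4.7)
The plan is to identify the right-hand side of (1) with $t=3$ as the generating function for the number of partitions of $n$ having Dyson rank at least $-1$. Let $N(m,n)$ denote the number of partitions of $n$ with rank $m$ and write $N(\geq r, n) = \sum_{m \geq r} N(m, n)$.

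First I would exploit conjugation symmetry: conjugating a partition negates its rank, so $N(m,n) = N(-m, n)$. Combined with $\sum_m N(m,n) = p(n)$, this gives
$$N(\geq -1, n) \;=\; N(1,n) + \frac{p(n) + N(0,n)}{2}.$$
Multiplying the desired identity by $(q;q)_\infty$ and using $\sum_n p(n) q^n = 1/(q;q)_\infty$, the target becomes
$$\tfrac{1}{2} + \tfrac{1}{2}(q;q)_\infty \sum_{n\geq 0} N(0,n)q^n + (q;q)_\infty \sum_{n \geq 0} N(1,n)q^n \;=\; \sum_{n \geq 0}(-1)^n q^{3n(n+1)/2}.$$

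Second, I would invoke the classical Atkin--Swinnerton-Dyer identities, which express $(q;q)_\infty \sum_n N(0,n)q^n$ and $(q;q)_\infty \sum_n N(1,n)q^n$ as explicit sums of theta-type series supported on pentagonal-number exponents $k(3k\pm 1)/2$. Inserting these and regrouping with the help of Euler's pentagonal number theorem,
$$(q;q)_\infty \;=\; \sum_{k \in \mathbb{Z}}(-1)^k q^{k(3k-1)/2},$$
reduces the claim to an elementary $q$-series identity in which the pentagonal exponents are meant to cancel pairwise, leaving only the exponents $3k(k+1)/2$ on the right.

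I expect the principal obstacle to be precisely this final $q$-series cancellation: one must verify that the combination of ASD series for $N(0,n)$ and $N(1,n)$ collapses exactly to $\sum_{n\geq 0}(-1)^n q^{3n(n+1)/2}$, with no residual pentagonal contributions and with the constant $\tfrac12$ absorbed correctly. A cleaner alternative, which bypasses the ASD identities, would be a Durfee-square argument: write $\lambda$ as a Durfee square of side $s$ with a right piece of first row $a$ and a bottom piece of length $b$, so rank $\geq -1$ becomes the condition $a+1 \geq b$; then build a sign-reversing involution on the violating configurations that isolates exactly one fixed configuration for each $n$, contributing the term $(-1)^n q^{3n(n+1)/2}$. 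The hard part is still, in either route, pinning down the exact match between the rank side and the clean one-sided theta series on the right.
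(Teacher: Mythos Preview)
The paper does not contain a proof of this statement; it is quoted from \cite{A-N} (Theorem~3 there) purely as background motivation, and all of the paper's own arguments concern the parity of $p_{t,t}(n)$, not the rank interpretation. So there is no proof in the paper against which to compare your attempt.

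Regarding the content of your sketch: the decomposition
\[
N(\geq -1,n)=N(1,n)+\tfrac12\bigl(p(n)+N(0,n)\bigr)
\]
is correct, and both strategies you propose (invoking the Atkin--Swinnerton-Dyer identities for $(q;q)_\infty\sum_n N(m,n)q^n$, or constructing a Durfee-square sign-reversing involution) are reasonable starting points. However, what you have written is a plan rather than a proof. The step that would actually establish the theorem---showing that the combination collapses exactly to $\sum_{n\geq 0}(-1)^n q^{3n(n+1)/2}$ with the constant $\tfrac12$ absorbed and no residual terms---is explicitly left open, and you yourself flag it as the ``principal obstacle.'' Neither the ASD computation nor the involution is actually carried out. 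Until one of these is executed in detail, the argument is incomplete.
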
 

In this paper, we present parity results for $p_{t,t}(n)$ for several odd values of $t.$ In Section \ref{Sect2}, we present complete parity characterizations for $p_{1,1}(n)$ and $p_{3,3}(n)$ which are available to us thanks to the lacunarity of the corresponding generating functions modulo 2.  Unfortunately, the generating functions for $p_{t,t}(n)$ modulo 2 for $t>3$ fail to be as lacunary.  Even so, a set of congruences modulo 2 for $p_{t,t}(2kn+j)$, $t=5, 7, 11, 13, 17, 19, 23$, is presented in Section \ref{SectT3}.  All of the proof techniques required to prove these parity results are elementary, including classical generating function manipulations and well--known results of Euler and Jacobi.


\section{Parity characterizations of $p_{1,1}(n)$ and $p_{3,3}(n)$}
\label{Sect2} 

In order to prove the parity characterization of $p_{1,1}(n)$, we need the following well-known identities.

\begin{lemma} The following identities hold true:
\begin{align}
(q;q)_{\infty}^{3} &  = \sum_{n=0}^{\infty} (-1)^n (2n+1)q^{n(n+1)/2}, \label{Jacobi} \\
(q;q)_{\infty} & =  \sum_{n=-\infty}^{\infty} (-1)^n q^{n(3n-1)/2} \label{Euler} 
\end{align}
\end{lemma}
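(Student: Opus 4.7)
Both identities in the lemma are classical: equation \eqref{Jacobi} is Jacobi's cube identity and \eqref{Euler} is Euler's pentagonal number theorem. The authors flag them as ``well-known,'' so I expect the paper simply to record them without proof and cite a standard reference. Nevertheless, a self-contained derivation of both can be organized around the Jacobi triple product identity
$$\sum_{n=-\infty}^{\infty} (-z)^n q^{n(n-1)/2} \;=\; (q;q)_\infty \, (z;q)_\infty \, (q/z;q)_\infty,$$
which I would take as the single engine behind both results.

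For \eqref{Euler}, the plan is to replace $q$ by $q^3$ and then set $z = q$ in the triple product. The exponent of $q$ on the left-hand side becomes $n + 3n(n-1)/2 = n(3n-1)/2$, and the right-hand side collapses via the three-residue factorization $(q;q^3)_\infty (q^2;q^3)_\infty (q^3;q^3)_\infty = (q;q)_\infty$. The pentagonal expansion drops out immediately.

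For \eqref{Jacobi}, I would exploit the fact that both sides of the triple product vanish at $z = 1$, divide through by $(1-z)$, and take the limit $z \to 1$ (equivalently, differentiate in $z$ at $z = 1$). On the product side, two of the three factors collapse to $(q;q)_\infty$ each, while $\lim_{z \to 1}(z;q)_\infty/(1-z) = (q;q)_\infty$ supplies a third. On the sum side, pairing the indices $n$ and $1-n$ inside the derivative (they share the exponent $n(n-1)/2$ and their signed coefficients combine to $(-1)^n(2n-1)$) yields, after a shift $n \mapsto n+1$, precisely $\sum_{n \geq 0}(-1)^n(2n+1)q^{n(n+1)/2}$.

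The main obstacle is purely bookkeeping: choosing the specializations so that signs and exponents land correctly, and, in the \eqref{Jacobi} case, handling the removable zero at $z = 1$ cleanly so that the doubly-infinite sum compresses to a singly-infinite one. Because both identities are completely standard and serve only as ``black boxes'' for the parity arguments that follow, I expect the authors to present a short citation rather than a derivation; the sketch above is only what I would write if a self-contained proof were demanded.
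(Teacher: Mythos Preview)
Your expectation is exactly right: the paper does not prove the lemma but simply records the two identities as Jacobi's identity and Euler's Pentagonal Number Theorem, citing Berndt's \emph{Number Theory in the Spirit of Ramanujan} (equations (1.3.24) and (1.3.18)). Your optional triple-product derivations are correct and go beyond what the paper supplies.
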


\begin{proof}
Equation \eqref{Jacobi} is Jacobi's identity (see \cite[Eq. (1.3.24)]{Berndt}).  Equation \eqref{Euler} is Euler's Pentagonal Number Theorem (see \cite[Eq. (1.3.18)]{Berndt}). 
\end{proof}


With the above in hand, we now present the characterization modulo 2 for $p_{1,1}(n).$  

\begin{theorem} 
\label{1characterization}
For all $n \geq 1$,
	$$p_{1,1}(n) \equiv \left\{ 
	\begin{array}{l}
	1 \pmod{2}, \mbox{ if } n = k(3k\pm 1) \mbox{\ for some\ } k, \\
	0 \pmod{2}, \mbox{\ otherwise.}
	\end{array}  \right. $$
	\label{T1} 
\end{theorem}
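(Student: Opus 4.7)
The plan is to reduce the generating function in \eqref{GFk} (taken with $t=1$) modulo $2$ to a single theta-like series whose exponents are exactly the numbers $k(3k\pm 1)$, and then simply read off the parity of the coefficients.

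First I would examine the sum appearing in the numerator of the generating function. For $t=1$ this is $\sum_{n\ge 0}(-1)^n q^{n(n+1)/2}$, which is not quite a standard identity on its own. However, Jacobi's identity \eqref{Jacobi} gives
$$(q;q)_\infty^3=\sum_{n=0}^\infty(-1)^n(2n+1)q^{n(n+1)/2},$$
and since $2n+1$ is odd for every $n$, reducing coefficients modulo $2$ gives
$$(q;q)_\infty^3\equiv\sum_{n=0}^\infty(-1)^nq^{n(n+1)/2}\pmod 2.$$
In other words, the numerator in \eqref{GFk} for $t=1$ is congruent to $(q;q)_\infty^3$ modulo $2$.

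Next I would substitute this into \eqref{GFk} and simplify: the $(q;q)_\infty$ in the denominator cancels with two powers, leaving
$$\sum_{n=0}^\infty p_{1,1}(n)q^n\equiv (q;q)_\infty^{2}\pmod 2.$$
Now I would use the standard Frobenius-type fact that, modulo $2$, $(q;q)_\infty^{2}\equiv (q^2;q^2)_\infty$, which follows termwise from $(1-q^k)^2\equiv 1-q^{2k}\pmod 2$.

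Finally I would invoke Euler's Pentagonal Number Theorem \eqref{Euler} with $q$ replaced by $q^2$, obtaining
$$(q^2;q^2)_\infty=\sum_{n=-\infty}^\infty(-1)^n q^{n(3n-1)}.$$
Reducing modulo $2$ turns the $(-1)^n$ into $1$, so
$$\sum_{n=0}^\infty p_{1,1}(n)q^n\equiv\sum_{n=-\infty}^\infty q^{n(3n-1)}\pmod 2.$$
Writing $n=k\ge 0$ gives exponents $k(3k-1)$, while $n=-k$ with $k\ge 1$ gives exponents $k(3k+1)$. Hence $p_{1,1}(n)$ is odd precisely when $n=k(3k\pm 1)$ for some $k$, which is the claim. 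The only mild obstacle is the initial observation that Jacobi's cube identity collapses modulo $2$ to exactly the series appearing in \eqref{GFk}; once that is noticed the rest is routine manipulation.
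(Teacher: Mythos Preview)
Your proof is correct and follows essentially the same route as the paper: reduce the numerator via Jacobi's identity modulo $2$ to $(q;q)_\infty^3$, cancel against the denominator to get $(q;q)_\infty^2\equiv(q^2;q^2)_\infty$, and then apply Euler's Pentagonal Number Theorem in $q^2$. The only cosmetic difference is that the paper first drops the sign in $\sum(-1)^n q^{n(n+1)/2}$ before matching with Jacobi, whereas you match first and drop the sign afterward; the content is identical.
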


\begin{proof}
Taking $k=1$ in \eqref{GFk}, we find that
\begin{align}
\displaystyle\sum_{n=0}^{\infty} p_{1,1}(n) q^n & = \frac{1}{(q;q)_{\infty}} \sum_{n=0}^{\infty} (-1)^n q^{n(n+1)/2} \nonumber \\ & \equiv \frac{1}{(q;q)_{\infty}} \sum_{n=0}^{\infty} q^{n(n+1)/2} \pmod{2}.
\label{GF1mod2} 
\end{align}
From Jacobi's identity \eqref{Jacobi}, we have
\begin{equation*}
(q;q)_{\infty}^{3} \equiv \sum_{n=0}^{\infty} q^{n(n+1)/2} \pmod{2}.
\end{equation*}
Using this fact and \eqref{GF1mod2} we obtain an expression for the generating function for $p_{1,1}(n)$ modulo $2$:
\begin{align}
\displaystyle\sum_{n=0}^{\infty} p_{1,1}(n) q^n \equiv (q;q)_{\infty}^2 \equiv (q^2;q^2)_{\infty}  \pmod{2}.
\end{align}

In order to complete the proof, we make use of Euler's Pentagonal Number Theorem \eqref{Euler} to obtain
\begin{align*}
\displaystyle\sum_{n=0}^{\infty} p_{1,1}(n) q^n & \equiv (q^2;q^2)_{\infty}  \equiv \sum_{k=-\infty}^{\infty} q^{k(3k-1)} \pmod{2} \\
& \equiv \sum_{k=1}^{\infty} q^{k(3k+1)} + \sum_{k=0}^{\infty} q^{k(3k-1)} \pmod{2},
\end{align*}
which completes the proof.
\end{proof}

With Theorem \ref{1characterization} in hand, we can now prove a number of corollaries for specific arithmetic progressions.  

\begin{corollary}
For all $n\geq 0,$ $p_{1,1}(2n+1) \equiv 0 \pmod{2}.$  
\end{corollary}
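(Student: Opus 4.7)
The plan is to exploit the parity characterization provided by Theorem \ref{1characterization}: $p_{1,1}(n)$ is odd precisely when $n$ is a generalized pentagonal number $k(3k\pm 1)$. So the corollary reduces to the elementary observation that every generalized pentagonal number is even.

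To see this, I would split on the parity of $k$. If $k$ is even, then $k(3k\pm 1)$ is obviously even. If $k$ is odd, then $3k$ is odd, hence $3k\pm 1$ is even, so again $k(3k\pm 1)$ is even. Thus no odd integer can be written in the form $k(3k\pm 1)$, and Theorem \ref{1characterization} forces $p_{1,1}(2n+1) \equiv 0 \pmod{2}$.

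An even shorter route, which I would actually use for the write-up, goes directly through the intermediate congruence
$$\sum_{n=0}^{\infty} p_{1,1}(n)\, q^n \;\equiv\; (q^2;q^2)_{\infty} \pmod{2}$$
established in the proof of Theorem \ref{1characterization}. The right-hand side is manifestly a power series in $q^2$, so every coefficient of an odd power of $q$ on the left-hand side must be even. In particular, $p_{1,1}(2n+1) \equiv 0 \pmod{2}$ for all $n\geq 0$.

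There is no real obstacle here; the only thing to be careful about is simply citing the correct intermediate identity from the preceding proof (rather than redoing the Jacobi/Euler manipulations). I expect the write-up to be no more than two or three lines.
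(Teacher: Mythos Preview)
Your proposal is correct and follows essentially the same route as the paper: both invoke Theorem~\ref{1characterization} and observe that $k(3k\pm 1)$ is always even (the paper phrases this as ``twice a pentagonal number,'' you do a parity split on $k$ --- same content). Your alternative via the intermediate congruence $\sum p_{1,1}(n)q^n \equiv (q^2;q^2)_\infty \pmod{2}$ is also fine and amounts to the same observation read off one line earlier in the proof of Theorem~\ref{1characterization}.
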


\begin{proof}
Note that $2n+1$ can never be represented as $k(3k \pm 1)$ because $k(3k \pm 1)$ is always even (it is twice a pentagonal number for any value $k$).  Thus, by Theorem \ref{1characterization}, the result follows.  
\end{proof}

\begin{corollary}
Let $p\geq 5$ be prime, and let $r$, $1\leq r\leq p-1$ be such that $12r+1$ is a quadratic non--residue modulo $p.$  Then, for all $n\geq 0,$ $p_{1,1}(pn+r) \equiv 0 \pmod{2}.$  
\end{corollary}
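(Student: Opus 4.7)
The plan is to apply Theorem \ref{1characterization} by the contrapositive: I will show that if $pn+r$ is a generalized pentagonal number of the form $k(3k\pm 1)$, then $12r+1$ must be a quadratic residue modulo $p$. The hypothesis that $12r+1$ is a non-residue then forces $pn+r$ never to land on this pentagonal set, and Theorem \ref{1characterization} delivers $p_{1,1}(pn+r)\equiv 0\pmod 2$.

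The key algebraic identity is the completion of the square
\[
12\,k(3k\pm 1) + 1 \;=\; 36k^2 \pm 12k + 1 \;=\; (6k\pm 1)^2.
\]
First I would record this identity. Then, assuming $pn+r=k(3k\pm 1)$ for some integer $k$, I would multiply by $12$ and add $1$ to get $12(pn+r)+1=(6k\pm 1)^2$, and reduce modulo $p$ to obtain
\[
12r+1 \;\equiv\; (6k\pm 1)^2 \pmod{p}.
\]
This exhibits $12r+1$ as a square mod $p$, contradicting the assumption that $12r+1$ is a quadratic non-residue modulo $p$.

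Hence no $k$ can satisfy $pn+r=k(3k\pm 1)$, and Theorem \ref{1characterization} immediately yields $p_{1,1}(pn+r)\equiv 0\pmod 2$. I do not foresee any serious obstacle; the only small subtlety worth noting is the standard convention that a quadratic non-residue is understood to be nonzero mod $p$, so the step $(6k\pm 1)^2\equiv 12r+1\pmod p$ genuinely witnesses quadratic residuacity. The hypothesis $p\geq 5$ guarantees that $12$ is invertible modulo $p$, which is implicitly used when one views the condition on $12r+1$ as a genuine constraint on the residue class of $r$.
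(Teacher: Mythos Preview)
Your argument is correct and follows essentially the same route as the paper's proof: both complete the square via $12\,k(3k\pm 1)+1=(6k\pm 1)^2$, reduce modulo $p$ to get $12r+1\equiv(6k\pm 1)^2\pmod{p}$, and conclude from the non-residue hypothesis that $pn+r$ is never of the form $k(3k\pm 1)$, so Theorem~\ref{1characterization} applies. Your added remarks on the non-residue convention and the invertibility of $12$ for $p\geq 5$ are reasonable clarifications but not points of divergence.
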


\begin{proof}
Note that, if $pn+r = k(3k \pm 1)$ for some $k,$ then $12(pn+r)+1 =  (6k\pm 1)^2.$  Notice also that
$$
12(pn+r)+1 = 12pn +12r+1 \equiv 12r+1 \pmod{p}
$$
but $12r+1$ is assumed to be a quadratic non--residue modulo $p.$  Therefore, such a representation cannot exist.  So by Theorem \ref{1characterization}, $p_{1,1}(pn+r) \equiv 0 \pmod{2}$ for all $n\geq 0.$  
\end{proof}


We discuss now the parity characterization of $p_{3,3}(n)$. By \eqref{GFk} and Ramanujan's theta function
\begin{equation}
\psi(q) = \sum_{n=0}^{\infty} q^{n(n+1)/2} = \frac{(q^2;q^2)_{\infty}^{2}}{(q;q)_{\infty}},
\label{psi}
\end{equation}
we have
\begin{align*}
\displaystyle\sum_{n=0}^{\infty} p_{3,3}(n) q^n & \equiv \frac{1}{(q;q)_{\infty}} \sum_{n=0}^{\infty} q^{3n(n+1)/2} \pmod{2} \\
&= \frac{1}{(q;q)_\infty}\frac{(q^6;q^6)_\infty^2}{(q^3;q^3)_\infty}  \ \ \ \ \ \ \ \mbox{ (by \eqref{psi})} \\
&\equiv \frac{1}{(q;q)_\infty}\frac{(q^3;q^3)_\infty^4}{(q^3;q^3)_\infty} \pmod{2} \\
& = \frac{(q^3;q^3)_{\infty}^3}{(q;q)_{\infty}}.
\end{align*}
The generating function for the number of 3-core partitions (see \cite[Theorem 1]{H-S}), denoted by $a_3(n)$, is given by
\begin{align*}
\sum_{n=0}^{\infty} a_3(n)q^n = \frac{(q^3;q^3)_{\infty}^3}{(q;q)_{\infty}}.
\end{align*}
So, $p_{3,3}(n) \equiv a_3(n) \pmod{2}$ for all $n \geq 0$. Thanks to the work of Hirschhorn and Sellers on a closed formula for $a_3(n),$ (see \cite[Theorem 6]{H-S}), we have the following parity characterization for $p_{3,3}(n):$ 

\begin{theorem}
\label{3characterization}
 For all $n \geq 1$,
	$$p_{3,3}(n) \equiv \left\{ 
	\begin{array}{l}
	1 \pmod{2}, \mbox{ if } 3n+1 \mbox{\ is a square}, \\
	0 \pmod{2}, \mbox{\ otherwise.}
	\end{array} \right. $$
	\label{T2} 
\end{theorem}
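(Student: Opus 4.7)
The plan is to build on the congruence $p_{3,3}(n) \equiv a_3(n) \pmod{2}$ that is already derived in the calculation immediately preceding the theorem statement. Thus the problem reduces to determining the parity of the number of $3$-core partitions of $n$, and the theorem will follow once I show that $a_3(n)$ is odd exactly when $3n+1$ is a perfect square.

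To accomplish this, I would invoke the closed-form expression for $a_3(n)$ from \cite[Theorem 6]{H-S}, which (in the form I expect) writes
$$a_3(n) = \sum_{d \mid 3n+1} \left(\frac{d}{3}\right),$$
where $\left(\frac{d}{3}\right)$ denotes the Legendre symbol. Since $\gcd(3n+1,3)=1$, every divisor $d$ of $3n+1$ contributes $\pm 1$ to the sum, so reducing modulo $2$ collapses each summand to $1$ and yields
$$a_3(n) \equiv \tau(3n+1) \pmod{2},$$
where $\tau$ is the number-of-divisors function.

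To finish, I would apply the classical observation that $\tau(N)$ is odd if and only if $N$ is a perfect square; the standard justification is that the involution $d \leftrightarrow N/d$ on the divisor set of $N$ has fixed points precisely at the divisors with $d^2 = N$. Chaining the congruences gives $p_{3,3}(n) \equiv \tau(3n+1) \pmod{2}$, which is odd exactly when $3n+1$ is a square, as the theorem asserts.

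The only real obstacle I anticipate is aligning the exact statement of \cite[Theorem 6]{H-S} with the Legendre-sum form above; if that reference already packages the parity characterization as its conclusion, the proof is essentially a one-line citation once we have $p_{3,3}(n) \equiv a_3(n) \pmod{2}$. Otherwise the elementary divisor-pair argument supplies the missing step, and no further generating-function manipulation beyond what is already displayed before the theorem is required.
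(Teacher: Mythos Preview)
Your proposal is correct and follows the same route as the paper: both arguments reduce to $p_{3,3}(n)\equiv a_3(n)\pmod 2$ and then invoke the closed formula of \cite[Theorem~6]{H-S}. You supply the extra step of reading off the parity via $a_3(n)\equiv \tau(3n+1)\pmod 2$, whereas the paper simply cites the reference and states the conclusion.
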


This characterization can be applied rather easily to prove several infinite families of parity results for $p_{3,3}(n).$  We share a number of these corollaries here.  

\begin{corollary}
\label{cor33-1}
For all $m\geq 0$ and all $n\geq 0,$ 
$$p_{3,3}\left(4^{m+1}n+\frac{7\cdot 4^m-1}{3}\right) \equiv 0 \pmod{2}$$ 
and 
$$p_{3,3}\left(4^{m+1}n+\frac{10\cdot 4^m-1}{3}\right) \equiv 0 \pmod{2}.$$ 
\end{corollary}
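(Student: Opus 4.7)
The plan is to reduce the claim to Theorem \ref{3characterization}, which asserts that $p_{3,3}(N)$ is odd precisely when $3N+1$ is a perfect square. It therefore suffices to show that $3N+1$ fails to be a square for every $N$ in each of the two progressions.

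First I would compute $3N+1$ for the first family. Setting $N = 4^{m+1}n + \tfrac{7\cdot 4^m - 1}{3}$, a direct calculation gives
\begin{equation*}
3N+1 = 3 \cdot 4^{m+1}n + 7 \cdot 4^m = 4^m\bigl(12n+7\bigr).
\end{equation*}
Since $4^m = (2^m)^2$ is a square, $3N+1$ is a square if and only if $12n+7$ is. But $12n+7 \equiv 3 \pmod{4}$, and every square is congruent to $0$ or $1$ modulo $4$, so no such $n$ exists.

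For the second family, setting $N = 4^{m+1}n + \tfrac{10\cdot 4^m - 1}{3}$, the analogous computation yields
\begin{equation*}
3N+1 = 3 \cdot 4^{m+1}n + 10 \cdot 4^m = 4^m\bigl(12n+10\bigr) = 2 \cdot 4^m \bigl(6n+5\bigr).
\end{equation*}
Since $6n+5$ is odd, the 2-adic valuation of $3N+1$ equals $2m+1$, which is odd. Hence $3N+1$ cannot be a perfect square.

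There is no real obstacle here; the entire argument reduces to two short computations checking that $3N+1$ has either the wrong residue modulo $4$ or an odd 2-adic valuation. The only care needed is in verifying that the subtractions of $1$ and the addition of $1$ cancel cleanly so that the factor $4^m$ appears explicitly, after which Theorem \ref{3characterization} delivers the conclusion.
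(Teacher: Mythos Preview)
Your proof is correct and follows essentially the same approach as the paper: reduce to Theorem~\ref{3characterization}, compute $3N+1 = 4^m(12n+7)$ and $3N+1 = 4^m(12n+10)$ respectively, and argue that neither can be a square. The only cosmetic difference is in the second case, where the paper observes $12n+10\equiv 2\pmod 4$ while you instead note that the $2$-adic valuation $2m+1$ is odd; both arguments are equivalent and equally brief.
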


\begin{proof}
Note first that 
$$
3\left(4^{m+1}n+\frac{7\cdot 4^m-1}{3}\right) + 1 = 4^m(12n+7) 
$$ 
after straightforward simplification.  Note that $4^m$ is a square while $12n+7$ cannot be.  (This is clear since $12n+7 \equiv 3 \pmod{4}$ and all squares are either $0$ or $1$ modulo 4.)  Thus, 
$$
3\left(4^{m+1}n+\frac{7\cdot 4^m-1}{3}\right) + 1 
$$ 
is not a square, and the result is proved thanks to Theorem \ref{3characterization}.

Next, note that 
$$
3\left(4^{m+1}n+\frac{10\cdot 4^m-1}{3}\right) + 1 = 4^m(12n+10) 
$$ 
after straightforward simplification.  Note that $4^m$ is a square while $12n+10$ cannot be.  (This is clear since $12n+10 \equiv 2 \pmod{4}$ and all squares are either $0$ or $1$ modulo 4.)  Thus, 
$$
3\left(4^{m+1}n+\frac{10\cdot 4^m-1}{3}\right) + 1 
$$ 
is not a square, and the result is proved thanks to Theorem \ref{3characterization}. 
\end{proof}

\begin{corollary}
\label{cor33-2}
For all $m\geq 0$ and all $n\geq 0,$ 
$$p_{3,3}\left(2\cdot 4^{m+1}n+\frac{ 13\cdot 4^m-1}{3}\right) \equiv 0 \pmod{2}.$$ 
\end{corollary}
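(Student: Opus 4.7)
The plan is to follow the template of Corollary \ref{cor33-1}: evaluate $3N+1$ where $N$ is the argument of $p_{3,3}$, extract the obvious square factor $4^m$, and then rule out that the remaining cofactor can be a perfect square by a congruence obstruction. Once this is done, Theorem \ref{3characterization} immediately yields the claim.

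First I would carry out the straightforward simplification
\[
3\left(2\cdot 4^{m+1}n+\frac{13\cdot 4^m-1}{3}\right)+1 \;=\; 6\cdot 4^{m+1}n + 13\cdot 4^m \;=\; 4^m(24n+13).
\]
Since $4^m$ is itself a perfect square, $4^m(24n+13)$ is a square if and only if $24n+13$ is one. So the task reduces to showing that $24n+13$ is never a square for any $n\geq 0$.

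For this, I would invoke a modular obstruction. The mod $4$ test used in Corollary \ref{cor33-1} is inconclusive here, since $24n+13\equiv 1\pmod{4}$, which is a valid quadratic residue. The natural next step is to pass to modulo $8$: since $24\equiv 0\pmod 8$ and $13\equiv 5\pmod 8$, we have $24n+13\equiv 5\pmod{8}$, whereas the quadratic residues modulo $8$ lie in $\{0,1,4\}$. Hence $24n+13$ cannot be a perfect square, so $3N+1$ is not a square either, and Theorem \ref{3characterization} gives the desired congruence.

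The only mild subtlety — what I would flag as the ``obstacle'' — is noticing that the mod $4$ argument of the sibling corollary fails in this case and that the modulus must be refined to $8$. Once that adjustment is made, the rest of the argument is essentially a one-line verification.
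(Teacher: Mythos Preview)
Your proof is correct and matches the paper's argument essentially line for line: the paper also simplifies $3N+1$ to $4^m(24n+13)$, notes $4^m$ is a square, and rules out $24n+13$ being a square via the mod $8$ obstruction $24n+13\equiv 5\pmod{8}$. Your additional remark that the mod $4$ test is inconclusive here is a nice observation, though the paper simply goes straight to mod $8$ without comment.
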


\begin{proof}
Note that 
$$
3\left(2\cdot 4^{m+1}n+\frac{ 13\cdot 4^m-1}{3}\right)  + 1 = 4^m(24n+13) 
$$ 
after straightforward simplification.  Note  that $4^m$ is a square while $24n+13$ cannot be.  (This is clear since $24n+13 \equiv 5 \pmod{8}$ and all squares are either $0, 1$ or $4$ modulo 8.)  Thus, 
$$
3\left(2\cdot 4^{m+1}n+\frac{13\cdot 4^m-1}{3}\right)  + 1
$$ 
is not a square, and the result is proved thanks to Theorem \ref{3characterization}.  
\end{proof}

\begin{corollary}
Let $p\geq 5$ be prime, and let $r$, $1\leq r\leq p-1$ be such that $3r+1$ is a quadratic non--residue modulo $p.$  Then, for all $n\geq 0,$ $p_{3,3}(pn+r) \equiv 0 \pmod{2}.$  
\end{corollary}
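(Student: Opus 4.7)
The plan is to mirror almost verbatim the argument used for the analogous corollary about $p_{1,1}(pn+r)$ earlier in Section~\ref{Sect2}, using Theorem~\ref{3characterization} in place of Theorem~\ref{1characterization}. The key observation is that Theorem~\ref{3characterization} reduces parity questions about $p_{3,3}$ to a question about whether a certain linear expression in $n$ is a perfect square.

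More precisely, I would argue by contrapositive. Suppose that $p_{3,3}(pn+r)$ is odd for some $n \geq 0$. Then by Theorem~\ref{3characterization}, $3(pn+r)+1$ must be a perfect square, say $3(pn+r)+1 = k^2$ for some nonnegative integer $k$. Reducing modulo $p$, we obtain
$$k^2 = 3pn + 3r + 1 \equiv 3r+1 \pmod{p}.$$
Thus $3r+1$ is a quadratic residue modulo $p$ (it is nonzero mod $p$ by the hypothesis that it is a QNR), contradicting the hypothesis of the corollary. Therefore no such $n$ exists, and $p_{3,3}(pn+r) \equiv 0 \pmod{2}$ for every $n \geq 0$.

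There is essentially no obstacle: the entire argument is a two-line application of Theorem~\ref{3characterization} together with the definition of quadratic non-residue. The only small point worth noting in the write-up is that one should observe $3r+1 \not\equiv 0 \pmod{p}$, which is automatic from the assumption that $3r+1$ is a quadratic non-residue mod $p$ (and hence nonzero mod $p$). The proof is directly parallel to the corresponding corollary for $p_{1,1}$, with the quadratic form $(6k\pm 1)^2 = 12n+1$ replaced by the simpler identity $3n+1 = k^2$ coming from Theorem~\ref{3characterization}.
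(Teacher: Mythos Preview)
Your proof is correct and is essentially identical to the paper's own argument: both reduce $3(pn+r)+1$ modulo $p$ to obtain $3r+1$, and then invoke the quadratic non--residue hypothesis together with Theorem~\ref{3characterization}. The only cosmetic difference is that you phrase it as a contrapositive while the paper states directly that $3(pn+r)+1$ can never be a square.
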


\begin{proof}
Note that $3(pn+r)+1$ can never be square.  This is because 
$$
3(pn+r)+1 = 3pn +3r+1 \equiv 3r+1 \pmod{p}
$$
and $3r+1$ is assumed to be a quadratic non--residue modulo $p.$   So by Theorem \ref{3characterization}, $p_{3,3}(pn+r) \equiv 0 \pmod{2}$ for all $n\geq 0.$  
\end{proof}


\section{Additional congruences}
\label{SectT3}
We now consider parity results for $p_{t,t}(n)$ for $t\geq 5.$  While characterizations modulo 2 for these functions do not appear to be readily available, we can still prove a significant set of Ramanujan--like congruences for several of these functions.  

\begin{theorem} 
\label{RSTheorem}
For all $n \geq 0$,
	\begin{align*}
	p_{5,5}(10n+j) & \equiv 0 \pmod{2},  j \in \{2, 6\},  \nonumber \\ 
	p_{7,7}(14n+j) & \equiv 0 \pmod{2},  j \in \{7, 9, 13 \},  \nonumber \\ 
	p_{11,11}(22n+j) & \equiv 0 \pmod{2},  j \in \{2, 8, 12, 14, 16\},  \nonumber \\ 
	p_{13,13}(26n+j) & \equiv 0 \pmod{2},  j \in \{2, 10, 16, 18, 20, 22 \},  \nonumber \\ 
	p_{17,17}(34n+j) & \equiv 0 \pmod{2},  j \in \{11, 15, 17, 19, 25, 27, 29, 33 \},  \nonumber \\ 
	p_{19,19}(38n+j) & \equiv 0 \pmod{2},  j \in \{2, 8, 10, 20, 24, 28, 30, 32, 34 \},  \nonumber \\ 
	p_{23,23}(46n+j) & \equiv 0 \pmod{2},  j \in \{11, 15, 21, 23, 29, 31, 35, 39, 41, 43, 45 \}.  \nonumber 
	\end{align*}
\end{theorem}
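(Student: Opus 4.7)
The proof plan builds on the strategies of Theorems~\ref{1characterization} and~\ref{3characterization}. Starting from \eqref{GFk} modulo $2$ (the signs $(-1)^n$ vanish) and applying Jacobi's identity \eqref{Jacobi} with $q$ replaced by $q^t$, so $(q^t;q^t)_\infty^3 \equiv \psi(q^t) \pmod 2$, we arrive at
\[
\sum_{n=0}^\infty p_{t,t}(n)q^n \equiv \frac{\psi(q^t)}{(q;q)_\infty}\pmod 2,
\]
where $\psi(q) = \sum_{k\ge 0} q^{k(k+1)/2}$.

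The key structural input is the $2$-dissection $\psi(q^t) = \psi_e(q^{2t}) + q^t\psi_o(q^{2t})$, valid because $k(k+1)/2$ is even for $k\equiv 0,3\pmod 4$ and odd otherwise, with
\[
\psi_e(q) = \sum_{m\in\mathbb{Z}} q^{m(4m+1)},\qquad \psi_o(q) = \sum_{m\in\mathbb{Z}} q^{m(4m+3)}
\]
both Jacobi-triple-product theta series. Parity-splitting $\sum p_{t,t}(n)q^n \equiv F_0(q^2) + qF_1(q^2)\pmod 2$ and using $\psi(q)\equiv (q;q)_\infty^3 \pmod 2$ to solve the resulting $2\times 2$ linear system yields closed expressions
\[
F_0(q) \equiv \frac{\psi_e(q)\psi_e(q^t) + q^{(t+1)/2}\psi_o(q)\psi_o(q^t)}{(q^2;q^2)_\infty},\quad F_1(q) \equiv \frac{q^{(t-1)/2}\psi_e(q)\psi_o(q^t) + \psi_o(q)\psi_e(q^t)}{(q^2;q^2)_\infty} \pmod 2.
\]
Each forbidden $j_0$ then corresponds to a specific coefficient of $F_0$ or $F_1$ (depending on whether $j_0$ is even or odd) that must be shown to be even for all $n$.

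Multiplying the relevant $F_i$ by $(q^2;q^2)_\infty$ and expanding via Euler's identity \eqref{Euler}, the claim becomes a parity statement about a sum of representation counts of $tn+\lfloor j_0/2\rfloor$ by the Diophantine forms $m(4m+1)$ and $m(4m+3)$, shifted by generalized pentagonal numbers coming from $(q^2;q^2)_\infty$. The main obstacle is showing this sum is even for all $n$: unlike in the $p_{1,1}$ and $p_{3,3}$ cases, where the analogous coefficient was a $0/1$ indicator tied to a closed geometric condition, here one must exhibit a pairing (or involution) on the lattice of solutions that cancels modulo $2$. The analysis rests on the observation that $m(4m+1)$ and $m(4m+3)$ take values in restricted subsets of residue classes modulo $t$ (essentially the quadratic-residue classes), and the forbidden $j_0$ listed in the theorem are precisely those for which the pairing closes. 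Carrying this verification out case by case is elementary but increasingly laborious --- $(t-1)/2$ cases per prime, culminating in eleven separate checks for $t=23$.
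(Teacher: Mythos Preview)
Your reduction to the expressions for $F_0$ and $F_1$ is correct and nicely done; up through
\[
F_0(q) \equiv \frac{\psi_e(q)\psi_e(q^t) + q^{(t+1)/2}\psi_o(q)\psi_o(q^t)}{(q^2;q^2)_\infty}\pmod 2
\]
(and the analogous formula for $F_1$) the argument is sound.  The gap is in what follows.  You assert that the required vanishing ``becomes a parity statement about a sum of representation counts'' and that ``one must exhibit a pairing (or involution) on the lattice of solutions,'' but you never exhibit one.  That is precisely the hard part.  Note first that multiplying $F_i$ by $(q^2;q^2)_\infty$ does \emph{not} isolate a single residue class modulo $t$: since $(q^2;q^2)_\infty$ is supported on even exponents and $\gcd(2,t)=1$, the convolution spreads each coefficient of $F_i$ across all residue classes mod $t$, so your ``representation count of $tn+\lfloor j_0/2\rfloor$'' description is not quite the right target.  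More fundamentally, even if the Diophantine reformulation were cleaned up, the claim that the listed $j$ are ``precisely those for which the pairing closes'' is an assertion, not an argument; no mechanism for the cancellation is given.

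The paper proceeds very differently, and the contrast is instructive.  Starting from the same congruence
\[
\sum_{n\ge 0} p_{t,t}(n)q^n \equiv \frac{(q^t;q^t)_\infty^3}{(q;q)_\infty}\pmod 2,
\]
it multiplies and divides by $(q^t;q^t)_\infty^{t-3}$ to recognise the $t$-core generating function $\sum a_t(n)q^n = (q^t;q^t)_\infty^t/(q;q)_\infty$, obtaining
\[
\sum_{n\ge 0} p_{t,t}(n)q^n \equiv \frac{1}{(q^{2t};q^{2t})_\infty^{(t-3)/2}}\sum_{n\ge 0} a_t(n)q^n \pmod 2.
\]
Because the prefactor lives in $q^{2t}$, a $t$-dissection followed by a $2$-dissection immediately gives
\[
\sum_{n\ge 0} p_{t,t}(2tn+j)q^n \equiv \frac{1}{(q;q)_\infty^{(t-3)/2}}\sum_{n\ge 0} a_t(2tn+j)q^n \pmod 2,
\]
so $a_t(2tn+j)\equiv 0\pmod 2$ for all $n$ implies the same for $p_{t,t}$.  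The listed congruences for $a_t$ are then quoted from Radu--Sellers, whose proof uses modular forms.  The paper explicitly remarks in its closing section that a fully elementary proof of this theorem --- i.e.\ one bypassing the modular-forms input --- would be desirable but is not known.  Your proposed route, if the promised involutions could actually be constructed, would constitute exactly such a proof; this is strong evidence that what you are deferring as ``elementary but increasingly laborious'' case-checking is in fact the entire difficulty, not a routine verification.
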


{
\begin{proof}
Let $t \geq 5$ be an odd number.  Taking \eqref{GFk} modulo 2, we have
\begin{align}
\displaystyle\sum_{n=0}^{\infty} p_{t,t}(n) q^n & \equiv \frac{1}{(q;q)_{\infty}} \sum_{n=0}^{\infty} q^{tn(n+1)/2} \pmod{2} \nonumber \\
& \equiv \frac{(q^t;q^t)_{\infty}^3}{(q;q)_{\infty}} \pmod{2}. \ \ \ \ \ \ \ \mbox{ (by \eqref{psi})} \label{eq4}
\end{align}
On the other hand, the generating function for $t$-core partitions is given by
\begin{align*}
\sum_{n=0}^{\infty} a_t(n)q^n = \frac{(q^t;q^t)_{\infty}^t}{(q;q)_{\infty}}.
\end{align*}
Thus, from (\ref{eq4}) we know 
\begin{align*}
\displaystyle\sum_{n=0}^{\infty} p_{t,t}(n) q^n 
& \equiv \frac{(q^t;q^t)_{\infty}^3}{(q;q)_{\infty}} \pmod{2} \nonumber \\
& = \frac{1}{(q^t;q^t)^{t-3}_{\infty}} \displaystyle\sum_{n=0}^{\infty} a_t(n) q^n . 
	\end{align*}
Since $t$ is odd, we then know 
\begin{align}
\sum_{n=0}^{\infty} p_{t,t}(n)q^n \equiv \frac{1}{(q^{2t};q^{2t})_{\infty}^{(t-3)/2}} \sum_{n=0}^{\infty} a_t(n)q^n \pmod{2}.
\label{eq6} 
\end{align}
The $t$-dissection of \eqref{eq6} yields, for each $r \in \{0,1,\ldots, t-1\}$,
\begin{align*}
\sum_{n=0}^{\infty} p_{t,t}(tn + r)q^{n} \equiv \frac{1}{(q^{2};q^{2})_{\infty}^{(t-3)/2}} \sum_{n=0}^{\infty} a_t(tn+r)q^n \pmod{2}.
\end{align*}
Now, after 2-dissecting both sides of the last expression we are left with
\begin{align}
\sum_{n=0}^{\infty} p_{t,t}(2tn + r)q^{n} & \equiv \frac{1}{(q;q)_{\infty}^{(t-3)/2}} \sum_{n=0}^{\infty} a_t(2tn+r)q^n \pmod{2}, \label{eq7} \\
\sum_{n=0}^{\infty} p_{t,t}(2tn + t+r)q^{n} & \equiv \frac{1}{(q;q)_{\infty}^{(t-3)/2}} \sum_{n=0}^{\infty} a_t(2tn+t+r)q^n \pmod{2}. \label{eq8}
\end{align}
Lastly, thanks to Radu and Sellers (\cite[Theorem 1.4]{R-S}) we know that, for all $n\geq 0,$ 
\begin{align*}
a_5(10n+j) & \equiv 0 \pmod{2}, j \in \{2, 6\}, \\
a_7(14n+j) & \equiv 0 \pmod{2}, j \in \{7, 9, 13\}, \\
a_{11}(22n+j) & \equiv 0 \pmod{2}, j \in \{2, 8, 12, 14, 16\}, \\
a_{13}(26n+j) & \equiv 0 \pmod{2},  j \in \{2, 10, 16, 18, 20, 22 \},  \\
a_{17}(34n+j) & \equiv 0 \pmod{2},  j \in \{11, 15, 17, 19, 25, 27, 29, 33 \},    \\
a_{19}(38n+j) & \equiv 0 \pmod{2},  j \in \{2, 8, 10, 20, 24, 28, 30, 32, 34 \}, \\
a_{23}(46n+j) & \equiv 0 \pmod{2},  j \in \{11, 15, 21, 23, 29, 31, 35, 39, 41, 43, 45 \}.  
\end{align*}
This, combined with \eqref{eq7} and \eqref{eq8}, implies the results of this theorem.  
\end{proof}
}


\section{Closing thoughts}

Several remarks are in order as we close.  First, in \cite{A-N}, Andrews and Newman introduce a second family of functions which they denote by $p_{2t,t}(n).$  (The reader is referred to \cite{A-N} for the details of this family of functions.) Our hope was that this family would also satisfy various Ramanujan--like congruence properties.  Unfortunately, based on extensive computations, this does not appear to be the case.  

Secondly, we close with two potential paths for future work.  Namely, it would be nice to have combinatorial proofs of these parity results.  It would also be gratifying to have a fully elementary proof of Theorem \ref{RSTheorem} (since Radu and Sellers relied on modular forms to carry out their proof of their parity results for $t$--core partition functions which were mentioned in the proof of Theorem \ref{RSTheorem}).  

\section*{Acknowledgment}

The first author was supported by FAPESP (grant no. 2019/14796-8).

\

\noindent Universidade Federal de S\~ao Paulo, Av. Cesare M. G. Lattes, 1201, S\~ao Jos\'e dos Campos, SP, 12247--014, Brazil. \\
E-mail address: silva.robson@unifesp.br

\

\noindent Department of Mathematics and Statistics, University of Minnesota Duluth, Duluth, MN  55812, USA. \\
E-mail address: jsellers@d.umn.edu

\end{document}